\documentclass[a4paper, 11pt]{amsart}

\usepackage{mathpazo}

\usepackage{amsrefs}
\usepackage{amsfonts}
\usepackage{amssymb}
\usepackage{amsthm}
\usepackage{amsmath}
\usepackage[all]{xy}

\oddsidemargin 0 mm

\evensidemargin 0 mm

\textwidth 160 mm

\DeclareMathOperator{\Hom}{Hom} 
\DeclareMathOperator{\Ext}{Ext} 
\DeclareMathOperator{\Tor}{Tor} 
\DeclareMathOperator{\img}{im} 
 \DeclareMathOperator{\E}{E}
\DeclareMathOperator{\D}{D} 
\DeclareMathOperator{\Tr}{T}
\newcommand{\id}{\mathrm{id}}
\newcommand{\ktwo}{\mathcal{K}_2}

\newcommand{\T}{\mathbb{T}}

\newcommand{\bbk}{\Bbbk}

\newtheorem{theorem}[subsection]{Theorem}
\newtheorem{lemma}[subsection]{Lemma}
\newtheorem{corollary}[subsection]{Corollary}
\theoremstyle{definition}
\newtheorem{definition}[subsection]{Definition}
\newtheorem{example}[subsection]{Example}
\theoremstyle{remark}

\begin{document}

\title{The Yoneda algebra of a graded Ore extension}

\author{Christopher Phan}

\address{Department of Mathematics\\University of Glasgow\\Glasgow
G12 8QW\\United Kingdom}

\email{c.phan@maths.gla.ac.uk}

\urladdr{http://www.maths.gla.ac.uk/~clphan/}

\subjclass[2000]{16E30, 16S36, 16S37, 16W50}

\keywords{ $\ktwo$ algebra, Ore extension, Yoneda algebra}

\begin{abstract}
Let $A$ be a connected-graded algebra with trivial module $\bbk$,
and let $B$ be a graded Ore extension of $A$.  We relate the
structure of the Yoneda algebra $\E(A) := \Ext_A(\bbk, \bbk)$ to
$\E(B)$. Cassidy and Shelton have shown that when $A$ satisfies
their $\ktwo$ property, $B$ will also be $\ktwo$. We prove the
converse of this result.
\end{abstract}

\maketitle

\section{Introduction}

Let $A$ be a connected-graded algebra over a field $\bbk$ generated
in degree $1$, and let $_A\bbk$ be the $A$-module $A/A_+$. We then
have a bigraded algebra $\E(A) := \Ext_A(\bbk, \bbk)$, called the
{\bf Yoneda algebra} of $A$. (Throughout, $\Ext$ and $\Tor$ refer to
the functors on the \emph{graded} category. We define $\E^i(A) =
\bigoplus_j \E^{i,j}(A)$, where the index $i$ refers to the
homological grading and $j$ refers to the grading inherited from
$A$.) Frequently studied is a property introduced by Priddy in
\cite{priddy70}: an algebra $A$ is {\bf Koszul} if $\E(A)$ is
generated in the first cohomological degree (equivalently,
$\E^{i,j}(A) = 0$ when $i \not = j$).  For a comprehensive reference
on Koszul algebras, see \cite{pp}.

In \cite{csk2} Cassidy and Shelton proposed the following
generalization of Koszul:
\begin{definition} A connected-graded algebra $A$ generated in degree $1$ is a $\ktwo$
algebra if $\E(A)$ is generated in the first two cohomological
degrees.
\end{definition} \noindent This class includes the class of
$N$-Koszul algebras defined by Berger in \cite{berger}. For
quadratic algebras, $\ktwo$ is equivalent to Koszul. The study of
$\ktwo$ algebras is attractive because the first two cohomological
degrees of $\E(A)$ are seen in a suitable presentation of $A$: we
have $\E^1(A) \simeq A_1^*$ while $\E^2(A)$ is roughly dual to the
essential relations of $A$. The class of $\ktwo$ algebras has been
further studied in \cite{cpsk2} and \cite{phan09}.

In addition to exploring the connections between $A$ and $\E(A)$,
algebraists have investigated structural similarities between
$\E(A)$ and $\E(B)$ when $B$ is an algebra related to $A$. For
example (see \cite{pp,csk2}), both the classes of Koszul and $\ktwo$
algebras are closed under regular central extensions, Zhang twists,
and tensor products.

Furthermore, both the class of Koszul algebras and the class of
$\ktwo$ algebras are closed under graded Ore extensions. Suppose
$\sigma:A \rightarrow A$ is a graded algebra automorphism and
$\delta: A(-1) \rightarrow A$ is a graded $\sigma$-derivation. Then
the associated Ore extension $B := A[z; \sigma, \delta]$ is also a
connected-graded algebra. (A primer on Ore extensions can be found
in \cite{bluebook}.) In \cite{csk2}, it was proved that if $A$ is a
$\ktwo$ algebra, then $B$ is also $\ktwo$. This is a generalization
of the well-known result that such a graded Ore extension of a
Koszul algebra is again Koszul.

Our goal in this article is to provide another connection between
$\E(A)$ and $\E(B)$ when $B$ is such a graded Ore extension of $A$:
\begin{theorem}\label{t:main} $B$ is $\ktwo$ only if $A$ is $\ktwo$.\end{theorem}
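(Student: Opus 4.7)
The strategy is to exhibit a surjective bigraded algebra homomorphism $\rho \colon \E(B) \twoheadrightarrow \E(A)$. Given such a $\rho$, the hypothesis that $\E(B)$ is generated by $\E^1(B) \cup \E^2(B)$ implies that $\E(A) = \rho(\E(B))$ is generated by $\rho(\E^1(B)) \cup \rho(\E^2(B)) \subseteq \E^1(A) \cup \E^2(A)$, so $A$ is $\ktwo$.

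For $\rho$ I would use the restriction map associated to the graded inclusion $\iota \colon A \hookrightarrow B$. Since $\iota$ sends $A_+$ into $B_+$, the restriction of $\bbk_B$ along $\iota$ is canonically $\bbk_A$. Restriction of extensions along $\iota$ preserves Yoneda composition and the internal grading, producing a bigraded algebra homomorphism $\rho \colon \Ext^*_B(\bbk_B, \bbk_B) \to \Ext^*_A(\bbk_A, \bbk_A)$.

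The surjectivity of $\rho$ is the main technical point, which I would verify at the level of minimal resolutions. From a minimal graded free $A$-resolution $P_\bullet \to \bbk_A$, the freeness of $B$ over $A$ yields a $B$-free resolution $B \otimes_A P_\bullet \to B \otimes_A \bbk_A \cong \bbk[z]$. Splicing with the short exact sequence $0 \to \bbk[z](-1) \xrightarrow{\cdot z} \bbk[z] \to \bbk_B \to 0$ of $B$-modules via a chain lift of multiplication by $z$, the resulting mapping cone is a graded free $B$-resolution $Q_\bullet \to \bbk_B$ with
\[ Q_n = (B \otimes_A P_n) \oplus (B \otimes_A P_{n-1})(-1). \]
Minimality holds because the connecting lift involves only $z$ and the $\sigma$-derivation $\delta$, both of which lie in $B_+$. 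Dualizing yields the bigraded vector-space decomposition $\E^{n,j}(B) \cong \E^{n,j}(A) \oplus \E^{n-1,j-1}(A)$. A comparison chain map $\psi \colon P_\bullet \to Q_\bullet$ of $A$-complexes, with $Q_\bullet$ viewed as an $A$-resolution of $\bbk_A$ via restriction of scalars, can be chosen so that $\psi_n(p)$ is $1 \otimes p \in B \otimes_A P_n$ up to a correction in the second summand. Pulling back a $B$-linear cocycle $f \in \Hom_B(Q_n, \bbk_B)$ along $\psi_n$ then recovers, up to cohomologically trivial contributions from the second summand, the first-summand component of $f$ in $\E^{n,j}(A)$, proving that $\rho$ surjects onto each $\E^{n,j}(A)$.

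The main obstacle is this last comparison step: the chain map $\psi$ necessarily carries corrections into the second summand of $Q_n$—encoding the twist $\sigma$ and the $\sigma$-derivation $\delta$ through the mapping-cone differential—and one has to verify that these corrections do not contribute to the first-summand component of the pullback on cohomology. This is a careful chain-level bookkeeping exercise, essentially routine once $Q_\bullet$ and its differential are written out explicitly.
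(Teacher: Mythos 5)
Your reduction is sound as far as it goes: if the restriction map $\rho = \E(\iota)\colon \E(B)\to\E(A)$ were a surjective bigraded algebra map, the $\ktwo$ property would indeed pass from $B$ to $A$. The gap is your surjectivity argument, which rests on a false claim: the mapping cone $Q_\bullet$, with $Q_n = (B\otimes_A P_n)\oplus (B\otimes_A P_{n-1})(-1)$, is \emph{not} a minimal resolution of $_B\bbk$ in general. Your justification (``the connecting lift involves only $z$ and the $\sigma$-derivation $\delta$, both of which lie in $B_+$'') is valid only at the bottom of the connecting chain map: Lemma \ref{l:fexists} gives $\img f_0\subseteq B_+\otimes V_0$ and $\img f_1\subseteq B_+\otimes V_1$, but the higher $f_n$ are obtained by lifting through exactness, and these lifts can be forced to contain nonzero scalar entries. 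The paper constructs Example \ref{e:fwithdegzero} precisely to exhibit this: there the matrix of $f_3$ contains the constants $1,2,3,\dots$, the dual differential satisfies $d(\lambda_j)=(j-3)\rho_j$, and consequently there are bidegrees with $\E^{3,j}(B)=0$ while $\E^{3,j}(A)\neq 0$ (e.g.\ $j=5$ when $\operatorname{char}\bbk\neq 2$). So both your decomposition $\E^{n,j}(B)\cong\E^{n,j}(A)\oplus\E^{n-1,j-1}(A)$ and the surjectivity of $\E(\iota)$ fail for that Ore extension.

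Moreover, the gap cannot be repaired by more careful chain-level bookkeeping, because unconditional surjectivity is false and conditional surjectivity is circular. By the long exact sequence (\ref{e:csles}), $\img\E(\iota)=\ker\zeta^*$, where $\zeta^*$ is an $\E(\iota)$-derivation vanishing on $\E^1(A)$ and $\E^2(A)$, hence on all of $\D(A)$ but not necessarily on all of $\E(A)$. When $B$ is $\ktwo$, Lemma \ref{l:iotasurj} together with multiplicativity of $\E(\iota)$ gives $\img\E(\iota)=\D(A)$ exactly; thus surjectivity of $\E(\iota)$ is \emph{equivalent} to $\E(A)=\D(A)$, i.e.\ to the theorem you are trying to prove. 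This is why the paper does not attempt a direct surjectivity proof: it keeps only the partial minimality of Lemma \ref{l:fexists}, and argues by contradiction from a minimal bidegree $(n,m)$ with $\E^{n,m}(A)\neq\D^{n,m}(A)$, using injectivity of $\varphi$ on the already-settled bidegree $(n,m-1)$ (Lemma \ref{l:alphavarphi}) to show that $(\xi,0)$ is a cocycle on $B\otimes W_n$, whence $[\xi]\in\img\E(\iota)=\D(A)$, a contradiction. Some argument of this minimal-counterexample type, which never requires the full resolution $B\otimes W_\bullet$ to be minimal, is what your proposal is missing.
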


Note that $B$ is quadratic if and only if $A$ is quadratic. We
therefore obtain this corollary:
\begin{corollary} $B$ is Koszul only if $A$ is
Koszul.\end{corollary}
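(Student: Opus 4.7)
The plan is to exhibit a surjective, bigraded algebra homomorphism $\rho\colon\E(B)\twoheadrightarrow\E(A)$; once this is in hand the theorem follows at once, since the image under $\rho$ of any algebra generating set of $\E(B)$ concentrated in cohomological degrees $1$ and $2$ will generate $\E(A)$ in those same degrees. The natural candidate for $\rho$ is the restriction-of-scalars map induced by $A\hookrightarrow B$, which is a bigraded algebra homomorphism because restriction is functorial and compatible with Yoneda composition. The whole task therefore reduces to proving that $\rho$ is surjective.

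To control $\rho$, I would construct a minimal graded free resolution of ${}_B\bbk$ as a mapping cone on multiplication by $z$. Because $\delta$ strictly raises the internal grading (in particular $\delta(1)=0$), the assignment $A_+\mapsto 0$, $z\mapsto z$ descends to a graded ring surjection $B\twoheadrightarrow\bbk[z]$; pulling back the short exact sequence $0\to\bbk[z](-1)\xrightarrow{\cdot z}\bbk[z]\to\bbk\to 0$ yields a short exact sequence of left $B$-modules
\[
0\to(B\otimes_A\bbk)(-1)\xrightarrow{\cdot z}B\otimes_A\bbk\to{}_B\bbk\to 0.
\]
Since $B$ is free, hence flat, as a right $A$-module, applying $B\otimes_A-$ to a minimal graded free resolution $Q_\bullet\to{}_A\bbk$ resolves the left and middle terms. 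Lifting $\cdot z$ to a chain map of those resolutions and taking its mapping cone produces a free resolution $P_\bullet\to{}_B\bbk$ with
\[
P_n=(B\otimes_A Q_n)\oplus(B\otimes_A Q_{n-1})(-1),
\]
which I expect to be minimal because $z\in B_+$ and $Q_\bullet$ was already minimal. Applying $\Hom_B(-,\bbk)$ and using the adjunction $\Hom_B(B\otimes_A M,\bbk)\cong\Hom_A(M,\bbk)$ then yields a bigraded identification
\[
\E^{i,j}(B)\cong\E^{i,j}(A)\oplus\E^{i-1,j-1}(A).
\]

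To identify $\rho$ as the projection onto the first summand, I would compute it via the $A$-linear comparison chain map $Q_\bullet\to P_\bullet$ given by $q\mapsto(1\otimes q,0)$, which covers the identity on $\bbk$. A $B$-linear cocycle $(\phi_1,\phi_2)$ in the decomposition above restricts through this comparison to $\phi_1$, so $\rho$ is the first-coordinate projection, hence surjective, and the theorem follows.

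The step I expect to be the main obstacle is verifying minimality of the mapping cone resolution $P_\bullet$: the lift of $\cdot z$ to a chain map $B\otimes_A Q_\bullet(-1)\to B\otimes_A Q_\bullet$ is \emph{not} simply tensor-multiplication on the $B$-factor, because $\sigma$ and $\delta$ intervene whenever $z$ is moved past elements of $A$, and one must choose this lift carefully so that all of the resulting contributions to the boundary on $P_\bullet$ land in $B_+\cdot P_{\bullet-1}$.
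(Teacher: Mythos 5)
Your reduction is sound as far as it goes: $\E(\iota)\colon\E(B)\to\E(A)$ is indeed a bigraded algebra homomorphism, and if it were surjective, generation of $\E(B)$ in cohomological degree one would force the same for $\E(A)$. The gap is exactly the step you flagged as ``the main obstacle'': minimality of the mapping-cone resolution---and with it your decomposition $\E^{i,j}(B)\cong\E^{i,j}(A)\oplus\E^{i-1,j-1}(A)$ and the surjectivity of $\rho$---is not a delicate point to be handled by a careful choice of lift; it is false, and \emph{no} choice of lift can repair it. The paper's Example \ref{e:fwithdegzero} is an explicit counterexample: there $A=\bbk\langle w,x,y,u\rangle/\langle yu,\,ux-xu,\,uw\rangle$, $\sigma=\id$, $\delta$ squares the generators, and the lift $f_3$ necessarily contains the scalar entries $1,2,3,\dots$. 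Since graded Betti numbers are invariants of minimal resolutions, minimality for \emph{some} lift would force $\dim\E^{3,j}(B)\geq\dim\E^{3,j}(A)=1$ for every $j\geq3$; but the computation at the end of that example gives $d(\lambda_j)=(j-3)\rho_j$, hence $\E^{3,j}(B)=0$ whenever $j\geq5$ and $j-3\neq0$ in $\bbk$. The same computation shows that the image of $\E^3(\iota)$ is a proper subspace of $\E^3(A)$ (in characteristic zero, a two-dimensional subspace of an infinite-dimensional space), so $\rho$ itself genuinely fails to be surjective.

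The deeper issue is that surjectivity of $\E(\iota)$ cannot be proved without using the hypothesis on $B$: in the long exact sequence (\ref{e:csles}), the image of $\E(\iota)$ in $\E^n(A)$ is the kernel of the derivation $\zeta^*$, which is only guaranteed to vanish on the subalgebra $\D(A)$ generated by $\E^1(A)$ and $\E^2(A)$; when $A$ is not $\ktwo$ it can be nonzero, as in the example above. (The only evident way to make your cone minimal is to know that $V_n$ is concentrated in internal degree $n$, i.e.\ that $A$ is already Koszul---which is circular.) The paper therefore takes a longer route: it proves surjectivity of $\E(\iota)$ only onto $\E^1(A)$ and $\E^2(A)$ (Lemma \ref{l:iotasurj}, which needs only that $f_0$ and $f_1$ have image in $B_+\otimes V_0$ and $B_+\otimes V_1$), shows that $\ktwo$-ness of $B$ forces $\E(\iota)(\E(B))=\D(A)$ (Lemma \ref{l:ses}), and then derives a contradiction by lifting a hypothetical class of minimal bidegree in $\E(A)\setminus\D(A)$ to a cohomology class in $\E(B)$ (proof of Theorem \ref{t:main}); the corollary then follows in one line, since $B$ is quadratic if and only if $A$ is, and for quadratic algebras $\ktwo$ is equivalent to Koszul. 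In effect your proposal attempts to reprove the main theorem directly, and it breaks down precisely at the obstruction that the paper's machinery exists to circumvent.
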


In Section \ref{s:resoln} of this article, we describe a procedure
that produces a projective resolution of $_B\bbk$ based on a minimal
projective resolution of $_A\bbk$. In Section \ref{s:maps}, we
create some maps that relate the structure of $\E(A)$ and $\E(B)$.
In Section \ref{s:proof}, we prove Theorem \ref{t:main}. We conclude
in Section \ref{s:examples} with some examples.

The author wishes to thank Kenneth Brown, Thomas Cassidy, Ulrich
Kr\"ahmer, and Brad Shelton for their helpful comments and
suggestions.

\section{Projective resolution for $_B\bbk$} \label{s:resoln}

A projective resolution
\[ \cdots \rightarrow P_n \rightarrow P_{n-1}
\rightarrow \cdots \rightarrow P_0\] of the $A$-module $M$ is {\bf
minimal} if the differentials in the sequence
\[ \Hom(P_0, \bbk) \rightarrow \cdots \rightarrow \Hom_A(P_{n-1}, \bbk) \rightarrow \Hom_A(P_n,
\bbk) \rightarrow \cdots\] are all zero. In this section, we
describe a procedure for constructing a projective resolution of the
$_B\bbk$ based on a minimal projective resolution of $_A\bbk$.
However, the resulting resolution for $_B\bbk$ will not necessarily
be minimal.

We begin by setting some notation in effect for the rest of the
article. Let $V$ be a finite-dimensional vector space over a field
$\bbk$, and let $\T(V)$ denote the tensor algebra. Let $I \subseteq
\sum_{n \geq 2} V^{\otimes n}$ be a finitely-generated homogeneous
ideal of $\T(V)$ and let $A := \T(V)/I$, which is a connected-graded
$\bbk$-algebra generated in degree $1$. Let $\sigma: A \rightarrow
A$ be a graded algebra automorphism, and $\delta: A(-1) \rightarrow
A$ be a graded $\sigma$-derivation---that is, for $a_1, a_2 \in A$,
we have
\[\delta(a_1 a_2) = \delta(a_1) \sigma(a_2) + a_1 \delta(a_2).\] Let
$B := A[z; \sigma, \delta]$ be the associated Ore extension---that
is, $B = \bigoplus_{n \geq 0} z^nA$ as an $A$-module, and for $a \in
A$, \[az = z\sigma(a) + \delta(a).\] We consider $z$ to have degree
$1$ in $B$, and under this grading $B$ is a connected-graded algebra
generated in degree $1$. In fact, $B := \T(W)/J$ for some ideal $J$,
where $W = V \oplus \bbk z$.

The following construction is a graded version of a construction by
Gopalakrishnan and Sridharan in \cite{gs66}. We recreate it from
scratch because we will refer to the details of the construction
later.

Let $V_n \subseteq A_+^{\otimes n}$ such that $A \otimes V_\bullet$,
as a subcomplex of the bar resolution $A \otimes A_+^{\otimes
\bullet}$, is a minimal projective resolution of the trivial module
$_A\bbk$. We may assume $V_0 = \bbk$ and $V_1 = V$.

\begin{definition}
We construct two chain complexes of projective $B$-modules:
\begin{enumerate}
\item $P_\bullet := B \otimes_A (A \otimes V_\bullet)(-1)$, which is
canonically isomorphic to $B \otimes V_\bullet(-1)$. (Here, the
degree shift is by internal degree.)
\item $Q_\bullet$ is the complex with
\[Q_i  = \begin{cases}
 B \otimes V_i, & \text{for $i \geq 0$,}\\
 \bbk, & \text{for $i = -1$},
 \end{cases}\]
 and differential $\partial_{i,Q} =
  -\partial_{i, P}$ for $i \geq 1$, and $\partial_{0, Q}: B \otimes
  V^0 \rightarrow \bbk$ given by the augmentation.
\end{enumerate}
\end{definition}

We begin by computing the homology of the complexes $P_\bullet$ and
$Q_\bullet$.

\begin{lemma} \label{l:homologycalc} We have
\[H_i(P_\bullet) = \begin{cases}
\bigoplus_{n \geq 0} \bbk( z^n + BA_+)(-1) , & \text{if $i = 0$},\\
0,& \text{otherwise,}
\end{cases}\]
and
\[H_i(Q_\bullet) = \begin{cases}
\bigoplus_{n \geq 1} \bbk( z^n + BA_+), & \text{if $i = 0$},\\
0, & \text{otherwise},
\end{cases}
\] where $z^n + BA_+ \in B/(BA_+)$.
\end{lemma}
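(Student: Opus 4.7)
The plan is to exploit the fact that the Ore decomposition $B = \bigoplus_{n \geq 0} z^n A$ makes $B$ a free right $A$-module, so that $B \otimes_A -$ is an exact functor on complexes of left $A$-modules.

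First I would compute $H_\bullet(P_\bullet)$. Since $A \otimes V_\bullet \to \bbk \to 0$ is exact (it is a resolution of ${}_A\bbk$), applying the exact functor $B \otimes_A -$ and using the canonical isomorphism $B \otimes_A (A \otimes V_i) \cong B \otimes V_i$ gives $H_i(P_\bullet) = 0$ for $i \geq 1$ and $H_0(P_\bullet) = (B \otimes_A \bbk)(-1) = (B/BA_+)(-1)$. The right-module decomposition yields $BA_+ = \bigoplus_n z^n A_+$, so $B/BA_+$ has $\bbk$-basis $\{z^n + BA_+\}_{n \geq 0}$, matching the claimed formula.

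For $Q_\bullet$ the underlying graded abelian groups and differentials (up to sign and an irrelevant internal shift) agree with those of $P_\bullet$ in homological degrees $\geq 1$, so $H_i(Q_\bullet) = 0$ for $i \geq 1$ by the preceding computation, and the image of $\partial_{1,Q}$ coincides with $\img(\partial_{1,P}) = BA_+$. The degree-zero differential is the augmentation $B \to \bbk$, whose kernel is $B_+$, so $H_0(Q_\bullet) = B_+/BA_+$. Under the identification $B/BA_+ \cong \bigoplus_n \bbk z^n$, this becomes $\bigoplus_{n \geq 1} \bbk(z^n + BA_+)$, since $1 \in B$ generates the excluded $n = 0$ summand. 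Finally $H_{-1}(Q_\bullet) = \bbk / \img(\text{aug}) = 0$, and $H_i(Q_\bullet) = 0$ trivially for $i < -1$.

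The calculation is essentially bookkeeping; the only point requiring care is sidedness --- the resolution $A \otimes V_\bullet$ is of left $A$-modules, while the flatness invoked is that of $B$ as a right $A$-module --- together with the verification that $BA_+$ really has the form $\bigoplus_n z^n A_+$, which is immediate from the Ore decomposition.
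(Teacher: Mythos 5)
Your proof is correct and takes essentially the same route as the paper's: the paper identifies $H_i(P_\bullet)$ with $\Tor^A_i(B, \bbk)(-1)$ and invokes the graded-freeness of $B$ as an $A$-module (which is precisely your exactness-of-$B \otimes_A -$ argument), then observes $H_i(Q_\bullet) = H_i(P_\bullet)$ for $i \geq 1$ and verifies $H_0(Q_\bullet)$ and $H_{-1}(Q_\bullet)$ by direct calculation. Your write-up simply makes explicit the bookkeeping the paper leaves to the reader, namely the decomposition $BA_+ = \bigoplus_n z^n A_+$ and the identification $H_0(Q_\bullet) = B_+/BA_+$.
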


\begin{proof}
Since $H_i(P_\bullet) = \Tor^A_i(B, \bbk)(-1)$ and $B$ is a
graded-free $A$-module, the first statement holds. Furthermore,
$H_i(Q_\bullet) = H_i(P_\bullet)$ when $i \geq 1$. The statement
about $H_0(Q_\bullet)$ and $H_{-1}(Q_\bullet)$ can be verified by
direct calculation.
\end{proof}

\begin{lemma}[c.f. {\cite[Lemma of Theorem 1]{gs66}}] \label{l:fexists}
There exists a graded chain map $f: P_\bullet \rightarrow Q_\bullet$
with $f_0(b \otimes \lambda) := bz \otimes \lambda$, $f_{-1} := 0$,
$\img f_0 \subseteq B_+ \otimes V_0$, and $\img f_1 \subseteq B_+
\otimes V_1$.
\end{lemma}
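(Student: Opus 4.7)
The plan is to construct $f$ inductively in homological degree, following the pattern of the Gopalakrishnan--Sridharan argument in \cite{gs66} but carefully in the graded setting. The base case is immediate: with $f_{-1}=0$ and $f_0(b\otimes 1) := bz$, the chain condition at level zero asks for the augmentation of $bz$ to vanish, which is automatic because $bz\in B_+$, and this simultaneously gives $\img f_0\subseteq B_+\otimes V_0$.

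The crux is the construction of $f_1$. The chain condition $\partial^Q_1 f_1 = f_0\partial^P_1$, together with $\partial^Q_1 = -\partial^P_1$, requires that $-bvz = -f_0\partial^P_1(b\otimes v)$ be realized as $\partial^P_1$ of some element of $Q_1 = B\otimes V$. The Ore relation $bvz = bz\sigma(v)+b\delta(v)$ identifies the $\sigma$-piece as $\partial^P_1(bz\otimes\sigma(v))$. The obstruction is the derivation piece: $\delta(v)\in A_2$ need not lie in $V$, so $b\otimes\delta(v)$ is not an element of $B\otimes V_1$. To circumvent this, I would fix a $\bbk$-linear map $\ell\colon V\to V\otimes V$ with $\mu\circ\ell = \delta|_V$, where $\mu\colon V\otimes V\twoheadrightarrow A_2$ is the multiplication; such $\ell$ exists precisely because $A$ is generated in degree one. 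Writing $\ell(v)=\sum_i u_i\otimes w_i$, I would define
\[ f_1(b\otimes v) := -bz\otimes\sigma(v)-\sum_i bu_i\otimes w_i, \]
and verify $\partial^P_1 f_1(b\otimes v) = -bz\sigma(v)-b\delta(v) = -bvz$, as required. The image condition $\img f_1\subseteq B_+\otimes V_1$ then follows because $bz\in B_+$ and each $bu_i\in B\cdot V\subseteq B_+$.

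For $i\geq 2$, a standard lifting argument in the graded category produces $f_i$. The composite $f_{i-1}\partial^P_i\colon P_i\to Q_{i-1}$ is a cycle (its boundary vanishes by the chain condition at level $i-1$ together with $(\partial^P)^2=0$), hence a boundary by the exactness of $Q_\bullet$ in positive homological degrees established in Lemma \ref{l:homologycalc}; projectivity of the free $B$-module $P_i$ then yields a graded lift $f_i\colon P_i\to Q_i$ satisfying the chain condition.

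The main obstacle is the $i=1$ step: the derivation $\delta$ moves $V$ out of degree one into $A_2$, and one must re-express $\delta(v)$ inside $V\otimes V$ in order to keep $f_1(b\otimes v)$ inside $B_+\otimes V_1$. The hypothesis that $A$ is generated in degree one is exactly what makes this possible, and once the $f_1$ formula is in hand, the remaining obstructions dissolve into routine homological bookkeeping.
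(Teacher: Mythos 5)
Your proof is correct and takes essentially the same route as the paper: define $f_0$ directly, construct $f_1$ by lifting $\delta|_{V}$ through the multiplication surjection $V \otimes V \twoheadrightarrow A_2$ (your $\ell$ is the paper's $\tilde{\delta}$), and then extend to higher homological degrees using the exactness of $Q_\bullet$ away from $Q_0$ (Lemma \ref{l:homologycalc}) together with projectivity of the free modules $P_i$. Incidentally, your formula $f_1(b \otimes v) = -bz \otimes \sigma(v) - \sum_i bu_i \otimes w_i$ is the one actually forced by the chain condition $\partial_Q \circ f_1 = f_0 \circ \partial_P$ and is the one matching the matrices $f_1$ displayed in the paper's examples; the expression $z \otimes \sigma^{-1}(x) - \tilde{\delta}(x)$ printed in the paper's proof appears to contain a sign/inverse typo.
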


\begin{proof}
By construction, $\img f_0 \subseteq B_+ \otimes V_0$. We may lift
$\delta: A_1 = V_1 \rightarrow A_2$ to a map $\tilde{\delta}: V_1
\rightarrow A_1 \otimes V_1 \subseteq B_+ \otimes V_1$. Then for $x
\in V_1$, put \[f_1(1 \otimes x) := z \otimes \sigma^{-1}(x) -
\tilde{\delta}(x) \in B_+ \otimes V_1.\] Computation shows that
$\partial_Q \circ f_1 = \partial_P \circ f_0$.

 By Lemma
\ref{l:homologycalc}, $Q_\bullet$ is exact at every term except at
$Q_0$. Therefore, the construction of the rest of the chain map
follows automatically.
\end{proof}

We can now construct a projective resolution of $_B\bbk$ from the
resolution $A \otimes V_\bullet$.

\begin{lemma}[c.f. {\cite[Theorem 1]{gs66}}] The algebraic mapping cone of $f$,
\begin{multline*}
 \cdots \rightarrow B \otimes (V_n \oplus V_{n-1}(-1))
\xrightarrow{\left( \begin{matrix}
\partial_{n} & 0 \\
f_n & \partial_{n-1} \end{matrix} \right)} B \otimes (V_{n-1} \oplus
V_{n-2}(-1)) \rightarrow \cdots\\
\rightarrow B \otimes (V_1 \oplus V_0(-1)) \xrightarrow{\left(
\begin{matrix}
\partial_{1} \\
f_0 \end{matrix} \right) } B \rightarrow \bbk \rightarrow 0,
\end{multline*}
is exact. (Here, the functions in each matrix act on the right.)
Hence, if we write $\mathrm{cone}(f)$ in the form $B \otimes
W_\bullet \rightarrow \bbk \rightarrow 0$, then $B \otimes
W_\bullet$ is a projective resolution of $_B\bbk$.
\end{lemma}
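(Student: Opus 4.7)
The plan is to apply the long exact sequence in homology associated to the mapping cone of $f$. Standard homological algebra gives
\[\cdots \to H_n(P_\bullet) \xrightarrow{f_*} H_n(Q_\bullet) \to H_n(\mathrm{cone}(f)) \to H_{n-1}(P_\bullet) \xrightarrow{f_*} H_{n-1}(Q_\bullet) \to \cdots\]
By Lemma~\ref{l:homologycalc}, $H_n(P_\bullet) = H_n(Q_\bullet) = 0$ for every $n \neq 0$. Substituting this in immediately forces $H_n(\mathrm{cone}(f)) = 0$ for all $n \geq 2$ and for all $n \leq -1$, and collapses the remaining terms to the four-term exact sequence
\[0 \to H_1(\mathrm{cone}(f)) \to H_0(P_\bullet) \xrightarrow{f_*} H_0(Q_\bullet) \to H_0(\mathrm{cone}(f)) \to 0.\]
Therefore the entire proof reduces to showing that $f_*$ is an isomorphism on $H_0$.

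This is the one genuine computation, and I expect the careful bookkeeping of the internal-degree shift to be the main obstacle. Using the formula $f_0(b \otimes 1) = bz \otimes 1$ from Lemma~\ref{l:fexists} together with the explicit identifications in Lemma~\ref{l:homologycalc}, the induced map $f_*$ sends the generator $[z^n + BA_+]$ of $H_0(P_\bullet)$ (for $n \geq 0$) to $[z^{n+1} + BA_+] \in H_0(Q_\bullet)$. These images exhaust the basis $\{z^m\}_{m \geq 1}$ of $H_0(Q_\bullet)$, so $f_*$ is a bijection of the underlying vector spaces. Moreover, the $(-1)$ shift built into the definition of $P_\bullet$ raises the internal degree of the element labeled $z^n$ from $n$ to $n+1$, while right multiplication by $z$ also raises the degree by $1$; hence the bijection is homogeneous, and $f_*$ is a graded isomorphism.

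It follows that $\mathrm{cone}(f)$ is acyclic. Each term $B \otimes W_i = B \otimes (V_i \oplus V_{i-1}(-1))$ is a free $B$-module of finite rank, because the $V_i$ are finite-dimensional, being pieces of a minimal resolution of the trivial module over the finitely presented algebra $A$. Writing the acyclic complex in the form $B \otimes W_\bullet \to \bbk \to 0$ therefore exhibits $B \otimes W_\bullet$ as a projective resolution of $_B\bbk$, as required.
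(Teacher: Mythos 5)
Your proof is correct and takes essentially the same route as the paper: both arguments reduce the lemma to showing $f$ is a quasi-isomorphism, using Lemma \ref{l:homologycalc} for $n \neq 0$ and the direct computation $[z^n] \mapsto [z^{n+1}]$ on $H_0$, the only cosmetic difference being that the paper cites \cite[Corollary 1.5.4]{weibel} for exactness of the cone where you unroll the long exact sequence by hand. One minor correction: the spaces $V_i$ need \emph{not} be finite-dimensional (in Example \ref{e:fwithdegzero} the third term of the minimal resolution is $A(-3,-4,-5,\dots)$, so $V_3$ is infinite-dimensional), but this is harmless since $B \otimes W_i$ is a direct sum of shifted copies of $B$, hence free and projective regardless of rank.
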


\begin{proof}
We consider $f_\bullet: H_\bullet(P_\bullet) \rightarrow
H_\bullet(Q_\bullet)$, the induced map on homology. Direct
calculation shows $f_0: H_0(P_\bullet) \rightarrow H_0(Q_\bullet)$
is an isomorphism. By Lemma \ref{l:homologycalc}, $f_n:
H_n(P_\bullet) \rightarrow H_n(Q_\bullet)$ is an isomorphism for $n
\not = 0$. Therefore, the chain map $f$ is a quasi-isomorphism, and
the algebraic mapping cone is exact (see, for example,
\cite[Corollary 1.5.4]{weibel}).
\end{proof}

It is \emph{not} true that $B \otimes W_\bullet \rightarrow \bbk
\rightarrow 0$  must be a \emph{minimal} projective resolution. For
the resolution to be minimal, $f_n(B \otimes V_n(-1))$ must lie
entirely inside $ B_+ \otimes V_n$ (or, equivalently, each entry of
the matrix representation of each $f_n$ must be an element of
$B_+$). This is not the case in Example \ref{e:fwithdegzero}.

\section{Maps between $\E(A)$ and $\E(B)$} \label{s:maps}
In this section, we consider maps relating $\E(A)$ and $\E(B)$.
Because $B \otimes W_n = B \otimes V_n \oplus B \otimes
V_{n-1}(-1)$, it is tempting to view $\E^n(B)$ as $\E^n(A) \oplus
\E^{n-1}(A)(-1)$. However, there are many obstructions to this,
including the fact that $\E(A)$ is not a subalgebra of $\E(B)$ and
that the resolution $B \otimes W_\bullet$ is not minimal.

However, enough of $B \otimes W_\bullet$ is minimal to prove the
following:
\begin{lemma} \label{l:iotasurj}
$\E(\iota)(\E^1(B)) = \E^1(A)$ and $\E(\iota)(\E^2(B)) = \E^2(A)$,
where $\iota: A \hookrightarrow B$ is the inclusion.
\end{lemma}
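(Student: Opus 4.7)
The plan is to exploit the fact that, although the resolution $B \otimes W_\bullet$ built in Section~\ref{s:resoln} need not be minimal, its first two differentials still take values in $B_+$. This is enough to read off $\E^1(B)$ and $\E^2(B)$ directly from $W_1^*$ and $W_2^*$ and to identify $\E(\iota)$ as an obvious projection.

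First, I would verify that the differential $B \otimes W_n \to B \otimes W_{n-1}$ has image in $B_+ \otimes W_{n-1}$ for $n = 1, 2$. Restricted to the $V_n$-summand of $W_n$, the differential is just $\partial_n$, which lies in $A_+ \otimes V_{n-1}$ by minimality of $A \otimes V_\bullet$. Restricted to the $V_{n-1}(-1)$-summand, the two contributions come from $\partial_{n-1}$ (again in $A_+$) and from $f_{n-1}$, which lies in $B_+ \otimes V_{n-1}$ precisely because $n-1 \in \{0,1\}$---this is the content of the image statements in Lemma~\ref{l:fexists}. Consequently $\Hom_B(-,\bbk)$ kills all cochain differentials in cohomological degrees $1$ and $2$, so
\[ \E^n(B) \cong W_n^* = V_n^* \oplus V_{n-1}(-1)^* \quad \text{for } n = 1, 2. \]

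Next I would observe that the evident inclusion $A \otimes V_\bullet \hookrightarrow B \otimes W_\bullet$, sending $a \otimes v$ to its image under the first-summand inclusion $V_n \hookrightarrow V_n \oplus V_{n-1}(-1) = W_n$, is a chain map of $A$-complexes lifting $\id_\bbk$; this follows at once from the matrix description of the cone differential, whose $V_n \to V_{n-1}$ component is exactly $\partial_n$. Because $B$ is $A$-free, $B \otimes W_\bullet$ is simultaneously an $A$-projective resolution of $\bbk$, so $\E(\iota)$ may be computed by composing the canonical inclusion $\Hom_B(B \otimes W_\bullet, \bbk) \hookrightarrow \Hom_A(B \otimes W_\bullet, \bbk)$ with restriction along this chain map. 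A $B$-linear map $\phi \colon B \otimes W_n \to \bbk$ satisfies $\phi(b \otimes w) = \epsilon(b)\,\phi(1 \otimes w)$, so its restriction to $A \otimes V_n$ recovers $\phi|_{V_n} \in V_n^* = \E^n(A)$. In degrees $1$ and $2$ this identifies $\E(\iota)$ with the projection $V_n^* \oplus V_{n-1}(-1)^* \twoheadrightarrow V_n^*$, which is surjective.

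The main obstacle is the low-degree minimality step, whose verification depends essentially on the explicit form of $f_0$ and $f_1$ supplied by Lemma~\ref{l:fexists}. For $n \geq 3$ the image of $f_{n-1}$ need not lie in $B_+ \otimes V_{n-1}$---this is exactly the failure of minimality of $B \otimes W_\bullet$ noted just after its construction---so the direct method does not extend to higher homological degrees, which is why the statement is confined to $n \leq 2$.
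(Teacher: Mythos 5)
Your strategy is the same as the paper's: exploit the low-degree minimality supplied by Lemma \ref{l:fexists} to read off the bottom of $\E(B)$ from the mapping cone, and identify $\E(\iota)$ with the projection dual to the first-summand inclusion $A \otimes V_\bullet \to B \otimes W_\bullet$. Your identification of $\E(\iota)$ is correct, and so is your computation of $\E^1(B)$. But there is a genuine gap at the step ``$\E^2(B) \cong W_2^*$.'' Minimality of the differentials $B \otimes W_1 \to B$ and $B \otimes W_2 \to B \otimes W_1$ only tells you that the cochain maps $W_0^* \to W_1^*$ and $W_1^* \to W_2^*$ vanish, i.e.\ that there are no coboundaries in degrees $1$ and $2$. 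To get $\E^2(B) = W_2^*$ you also need every element of $W_2^*$ to be a \emph{cocycle}, and the cocycle condition is dual to the \emph{third} differential of the cone, $B \otimes (V_3 \oplus V_2(-1)) \to B \otimes (V_2 \oplus V_1(-1))$, whose lower-left entry is $f_2$ --- a map over which Lemma \ref{l:fexists} gives no control whatsoever. Concretely, writing $f_2'\colon V_2(-1) \to V_2$ for the reduction of $f_2$ modulo $B_+$ (the notation of the proof of Lemma \ref{l:varphiandtau}), a functional $(\xi,\eta) \in W_2^*$ is a cocycle if and only if $\xi \circ f_2' = 0$; so the claim $\E^2(B) \cong W_2^*$, and with it surjectivity of $\E^2(\iota)$, is \emph{equivalent} to $\img f_2 \subseteq B_+ \otimes V_2$, which is essentially the statement being proved. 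Your closing paragraph exposes the off-by-one error: you say the method is confined to $n \leq 2$ because $f_{n-1}$ is uncontrolled once $n \geq 3$, but the cocycle condition for $\E^n(B)$ involves $f_n$, not $f_{n-1}$, so the difficulty already strikes at $n = 2$.

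For comparison: the paper's own proof does not compute $\E^2(B)$ from the cone directly; it asserts that the first two steps of the cone ``may be extended to a minimal projective resolution $B \otimes U_\bullet$'' and then works in that minimal resolution. That extension exists precisely when $\ker\bigl(B \otimes W_2 \to B \otimes W_1\bigr) \subseteq B_+ \otimes W_2$, which (using exactness of the cone) is again equivalent to $\img f_2 \subseteq B_+ \otimes V_2$; the implication is not formal, since over $R = \bbk[x]$ the exact complex $R(-2) \xrightarrow{0} R(-1) \xrightarrow{x} R \to \bbk \to 0$ has minimal differentials yet extends to no minimal resolution. So your gap mirrors a point the paper itself glosses over, but a self-contained proof must close it. The cleanest repair avoids the cone entirely: in the long exact sequence (\ref{e:csles}), exactness at $\E^n(A)$ gives $\E(\iota)(\E^n(B)) = \ker\bigl(\zeta^*|_{\E^n(A)}\bigr)$, and since $\zeta^*$ vanishes on $\E^1(A)$ and $\E^2(A)$ (a fact from \cite{csk2} quoted in the paper), both equalities of the lemma follow at once. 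Alternatively, one must prove directly that $f_2$ can be chosen (equivalently, automatically has) image in $B_+ \otimes V_2$; nothing in your argument, or in Lemma \ref{l:fexists}, does this.
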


\begin{proof}
Since $\img f_0 \subseteq B_+ \otimes V_0$ and $\img f_1 \subseteq
B_+ \otimes V_1$, the sequence
\[
 B \otimes (V_2 \oplus V_{1}(-1))
\xrightarrow{\left( \begin{matrix}
\partial_{2} & 0 \\
f_1 & \partial_{1} \end{matrix} \right)} B \otimes (V_1 \oplus
V_0(-1)) \xrightarrow{\left(
\begin{matrix}
\partial_{1} \\
f_0 \end{matrix} \right) } B
\]
may be extended to a minimal projective resolution $B \otimes
U_\bullet$ of $_B\bbk$. Now, the inclusion $\iota: A \hookrightarrow
B$ induces a chain map $\tilde{\iota}: A \otimes V_\bullet
\rightarrow B \otimes U_\bullet$ where $\tilde{\iota}_1 (a \otimes
x) = \iota(a) \otimes (x, 0)$  and $\tilde{\iota}_2 (a \otimes x) =
\iota(a) \otimes (x, 0)$.\end{proof}

We now define some chain maps for use later. Let
\[\tilde{\varphi}_n: \Hom_\bbk(V_{n-1}(-1), \bbk) \rightarrow
\Hom_\bbk(V_n \oplus V_{n-1}(-1), \bbk)\] be the map dual to the
projection $V_n \oplus V_{n-1}(-1) \twoheadrightarrow V_{n-1}(-1)$
onto the second direct summand. Let \[\tilde{\tau}_n: \Hom_\bbk(V_n
\oplus V_{n-1}(-1), \bbk) \rightarrow \Hom_\bbk(V_n, \bbk)\] be the
restriction to the first direct summand.
\begin{lemma} \label{l:varphiandtau}
\begin{enumerate}
\item $\tilde{\varphi}_n$ and $\tilde{\tau}_n$ induce maps $\varphi_n:
\E^{n-1}(A)(1) \rightarrow \E^{n}(B)$ and $\tau_n:\E^{n}(B)
\rightarrow \E^n(A)$, respectively.
\item $\tau_n = \E^n(\iota)$, where $\iota:A
\hookrightarrow B$ is the inclusion.
\item $\varphi_n(\E^{n-1}(A)) = \ker(\tau_n)$.
\end{enumerate}
\end{lemma}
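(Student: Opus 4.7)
The plan is to work directly with the explicit cochain complexes $\Hom_\bbk(V_\bullet, \bbk)$ and $\Hom_\bbk(W_\bullet, \bbk)$ (where $W_n = V_n \oplus V_{n-1}(-1)$) and to exploit the fact that the $A$-resolution $A \otimes V_\bullet$ is minimal, even though the induced $B$-resolution $B \otimes W_\bullet$ is only partially so.

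For (1), I would first unwind the coboundary $d_B$ on $\Hom_\bbk(W_\bullet, \bbk)$ induced by the mapping-cone matrix $\bigl(\begin{smallmatrix}\partial & 0 \\ f & \partial\end{smallmatrix}\bigr)$. Because $A \otimes V_\bullet$ is a minimal $A$-resolution, both $\partial$-blocks satisfy $\partial(1 \otimes v) \in A_+ \otimes V_{\bullet-1} \subseteq B_+ \otimes V_{\bullet-1}$, so they vanish after composition with the augmentation $\epsilon\colon B \to \bbk$. Thus only the $f$-block contributes to $d_B$. Writing a cochain as $\psi = (\psi_1, \psi_2)$, this reduces $d_B$ to a map whose only nontrivial component sends the $V_{n-1}(-1)$-slot into the $V_n$-slot. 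Consequently $d_B \tilde{\varphi}_n(\psi_2) = 0$ (since $\tilde{\varphi}_n$ places zero in the $V_n$-slot, which is the only slot the surviving block maps \emph{into}), and $(\tilde{\tau}_{n+1} d_B \psi)(v) = 0$ for $v \in V_{n+1}$ (since any contribution to the $V_{n+1}$-coefficient can only come from $\partial$, which is killed by $\epsilon$). Combined with the zero differential on $\Hom_\bbk(V_\bullet, \bbk)$, this shows both $\tilde{\varphi}_n$ and $\tilde{\tau}_n$ are chain maps and descend to cohomology.

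For (2), I would exhibit a chain lift of $\iota$ by setting $\tilde{\iota}_n(a \otimes v) := \iota(a) \otimes (v, 0)$; commutation with the differentials is immediate because $\partial_B(\iota(a) \otimes (v, 0)) = (\iota(a)\partial v, 0)$, with no contribution from the $f$-block on the $V_n$-slot. Then $\E^n(\iota)[\psi] = [\psi \circ \tilde{\iota}_n]$, which on $v \in V_n$ returns $\psi(v, 0) = \tilde{\tau}_n(\psi)(v)$, so $\E^n(\iota) = \tau_n$.

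For (3), the inclusion $\varphi_n(\E^{n-1}(A)) \subseteq \ker \tau_n$ is immediate from $\tilde{\tau}_n \circ \tilde{\varphi}_n = 0$. For the reverse, let $[\psi] \in \ker \tau_n$ with $\psi = (\psi_1, \psi_2)$. Since the $A$-complex has zero differential, $[\psi_1] = 0$ in $\E^n(A)$ forces $\psi_1 = 0$ literally, hence $\psi = \tilde{\varphi}_n(\psi_2)$. The cocycle condition $d_B \psi = 0$ imposes no constraint on $\psi_2$ (by the description of $d_B$ in (1)), so $\psi_2$ is automatically a cocycle representing an element of $\E^{n-1}(A)$ whose image under $\varphi_n$ is $[\psi]$.

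The main obstacle, and really the only nontrivial point, is the explicit description of $d_B$ on $\Hom_\bbk(W_\bullet, \bbk)$: once one observes that the minimality of $A \otimes V_\bullet$ kills both $\partial$-blocks after augmentation, parts (1)--(3) are essentially formal consequences of the block-triangular form of the mapping-cone differential.
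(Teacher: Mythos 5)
Your proposal is correct and follows essentially the same route as the paper: your explicit computation that minimality of $A \otimes V_\bullet$ kills the $\partial$-blocks of the dual differential, leaving only the reduced $f$-block, is exactly the content of the paper's commutative diagram involving $f'_{n-1}$, and your chain lift $\tilde{\iota}_n(a \otimes v) = \iota(a) \otimes (v,0)$ and the ``$\psi_1 = 0$ literally'' argument for part (3) match the paper's proof. (Only caveat: a harmless off-by-one in your description of which slot the surviving block maps between; the claims you actually use are indexed correctly.)
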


\begin{proof} Recall that we have natural isomorphisms
\[\Hom_A(A \otimes -, \bbk) \simeq \Hom_\bbk(-, \bbk) \text{ and } \Hom_B(B
\otimes -, \bbk) \simeq \Hom_\bbk(-, \bbk).\] The map
$\tilde{\varphi}_n$ induces a map $\varphi_n: \E^{n-1}(A)
\rightarrow \E^{n}(B)$ because the diagram \[\xymatrix{ V_n \oplus
V_{n-1}(-1) \ar[rr]^{\left(\begin{matrix} 0 \cr 1 \end{matrix}
\right)} \ar[d]^{\left(\begin{matrix} 0 & 0 \cr f'_{n-1} &
0\end{matrix}\right)} & & V_{n-1} \ar[d]^{\begin{matrix} 0
\end{matrix}} \cr V_{n-1} \oplus V_{n-2}(-1)
\ar[rr]_{\left(\begin{matrix} 0 \cr 1
\end{matrix}\right)} & & V_{n-2} }\]
commutes, where $f_{n-1}'$ is the composition
\[V_{n-1}(-1) \hookrightarrow B \otimes V_{n-1}(-1) \xrightarrow{f_{n-1}}
B \otimes V_{n-1} \twoheadrightarrow B/B_+ \otimes V_{n-1} \simeq
V_{n-1}.\] We have $\tau_n = \E^n(\iota)$ because $\tilde{\tau}_n$
is dual to the chain map
\[A \otimes V_n \xrightarrow{\iota \otimes \id} B \otimes V_n
\hookrightarrow B \otimes (V_n \oplus V_{n-1}(-1)).\]

It is clear that $\varphi_n(\E(A)) \subseteq \ker(\tau_n)$. Now,
suppose $\xi \in \Hom_\bbk(V_n \oplus V_{n-1}(-1), \bbk)$ represents
a cohomology class $[\xi] \in \ker(\tau_n) \subseteq \E^n(B)$. As $A
\otimes V_\bullet$ is a minimal projective resolution for $_A\bbk$,
this implies that $\tilde{\tau}_n(\xi) = 0$, that is,  $\xi|_{V_n
\oplus 0} = 0$. So, there exists $\xi' \in \Hom_\bbk(V_{n-1}(-1),
\bbk)$ such that $\xi = (0, \xi')$, that is, $\tilde{\varphi}(\xi')
= \xi$.
\end{proof}

Let $\overline{z} \in W^*$ such that $\overline{z}(z) = 1$ and
$\overline{z}(V) = 0$. We also use the notation $\overline{z}$ to
mean the induced element of bidegree $(1,1)$ in the Yoneda algebra
$\E(B)$.  Let $\D(A)$ be the subalgebra of $\E(A)$ generated as a
$\bbk$-algebra by $\E^1(A)$ and $\E^2(A)$. We define $\D(B)$
analogously.

In the case where $A$ is $\ktwo$, \cite{csk2} exhibits short exact
sequences \begin{equation} \label{e:csses} 0 \rightarrow
\overline{z} \E^{n-1}(B) \rightarrow \E^n(B) \rightarrow \E^n(A)
\rightarrow 0.\end{equation} Our goal is to show that if $B$ is
$\ktwo$, there are similar short exact sequences
\[0 \rightarrow \overline{z} \E^{n-1}(B) \rightarrow \E^n(B)
\rightarrow \D^n(A) \rightarrow 0 .\]

We recall some more results from \cite{csk2}. As before, $\iota:A
\hookrightarrow B$ is the inclusion.
 Noting that $A_+B = BA_+$ is an ideal of $B$, set $C := B/A_+B$. As a $\bbk$-algebra, $C
\simeq \bbk[z]$, and has a $B$-module endomorphism $\zeta: C
\rightarrow C$ via right-multiplication by $z$. Furthermore, $_B C =
B \otimes_A \bbk$, meaning $\Ext_B(C, \bbk) = \E(A)$. Hence, the
short exact sequence
\[0 \rightarrow C(-1) \xrightarrow{\zeta} C \rightarrow \bbk
\rightarrow 0\] yields a long exact sequence
\begin{equation}
\label{e:csles} \cdots \rightarrow \E^{n-1}(A)(1)
\xrightarrow{\alpha} \E^n(B) \xrightarrow{\E(\iota)} \E^n(A)
\xrightarrow{\zeta^*} \E^n(A)(1) \rightarrow \cdots. \end{equation}
(It is this sequence that breaks into the short exact sequences
(\ref{e:csses}) above when $A$ is $\ktwo$.) The map $\zeta^*: \E(A)
\rightarrow \E(A)(0,1)$ is a $\E(\iota)$-derivation, and vanishes on
$\E^1(A)$ and $\E^2(A)$.

\begin{lemma} $\E(\iota)(\D(B)) = D(A)$. \end{lemma}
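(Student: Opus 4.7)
The plan is to exploit that $\E(\iota)$, being induced by an algebra homomorphism $\iota: A \hookrightarrow B$, is itself a graded algebra homomorphism $\E(B) \to \E(A)$. Under any algebra homomorphism, the image of a subalgebra generated by a set $S$ is the subalgebra generated by the image of $S$. Taking $S = \E^1(B) \cup \E^2(B)$, we get
\[ \E(\iota)(\D(B)) = \bbk\bigl\langle \E(\iota)(\E^1(B)) \cup \E(\iota)(\E^2(B)) \bigr\rangle. \]

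Now I would invoke Lemma \ref{l:iotasurj}, which says exactly that $\E(\iota)(\E^1(B)) = \E^1(A)$ and $\E(\iota)(\E^2(B)) = \E^2(A)$. Substituting into the display above identifies $\E(\iota)(\D(B))$ with the subalgebra of $\E(A)$ generated by $\E^1(A)$ and $\E^2(A)$, which by definition is $\D(A)$.

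Since the surjectivity of $\E(\iota)$ in cohomological degrees one and two has already been handled (this is the content of Lemma \ref{l:iotasurj}, and is where the partial minimality of the resolution $B \otimes W_\bullet$ is used), there is no remaining obstacle; the present lemma is purely a formal consequence, with no calculation required.
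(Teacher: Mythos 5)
Your proof is correct and is essentially the paper's own argument: the paper likewise deduces the result from Lemma \ref{l:iotasurj} together with the (implicit) fact that $\E(\iota)$ is an algebra homomorphism, so the image of the subalgebra generated by $\E^1(B)$ and $\E^2(B)$ is the subalgebra generated by $\E^1(A)$ and $\E^2(A)$. You have merely spelled out the formal step that the paper leaves tacit.
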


\begin{proof}
By Lemma \ref{l:iotasurj}, $\E(\iota)(\D^1(B)) = \E^1(A) = \D^1(A)$
and $\E(\iota)(\D^2(B)) = \E^2(A) = \D^2(A)$.
\end{proof}

\begin{lemma} \label{l:ses} Suppose $B$ is $\ktwo$. Then the long exact sequence
(\ref{e:csles}) breaks into short exact sequences
\begin{equation}\label{e:myses} 0 \rightarrow \overline{z} \E^{n-1}(B)
\rightarrow \E^n(B) \rightarrow \D^n(A) \rightarrow 0 \hbox{\hskip 1
cm} (n \geq 1).\end{equation}
\end{lemma}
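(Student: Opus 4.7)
The plan is to split the long exact sequence (\ref{e:csles}) into the advertised short exact sequences by identifying, for each $n \geq 1$, both the image and the kernel of $\E^n(\iota) \colon \E^n(B) \to \E^n(A)$. Once we know that $\img \E^n(\iota) = \D^n(A)$ and $\ker \E^n(\iota) = \overline{z}\E^{n-1}(B)$, the connecting map $\alpha$ induces an isomorphism from $\E^{n-1}(A)(1)/\img \zeta^*$ onto $\overline{z}\E^{n-1}(B)$, and (\ref{e:csles}) collapses into (\ref{e:myses}), with the first arrow being inclusion and the second being $\E^n(\iota)$.

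For the image, one direction follows directly from exactness of (\ref{e:csles}): because $\zeta^*$ is a derivation-like map that vanishes on $\E^1(A)$ and $\E^2(A)$, it vanishes on the subalgebra $\D(A)$, so $\D^n(A) \subseteq \ker \zeta^* = \img \E^n(\iota)$. The reverse containment uses the $\ktwo$ hypothesis on $B$, which forces $\E^n(B) = \D^n(B)$; combined with the preceding lemma this yields $\E^n(\iota)(\E^n(B)) = \E^n(\iota)(\D^n(B)) = \D^n(A)$.

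For the kernel, the inclusion $\overline{z}\E^{n-1}(B) \subseteq \ker \E^n(\iota)$ is immediate from $\E^n(\iota)(\overline{z}) = 0$ (because $\overline{z}$ vanishes on $V = A_1$) and the multiplicativity of $\E(\iota)$. For the reverse inclusion, exactness of (\ref{e:csles}) together with part 3 of Lemma \ref{l:varphiandtau} identifies $\ker \E^n(\iota) = \img \alpha = \img \varphi_n$, so it suffices to show $\img \varphi_n \subseteq \overline{z}\E^{n-1}(B)$. My approach would be, given a cocycle $\xi' \in \Hom_\bbk(V_{n-1}, \bbk)$ representing a class in $\E^{n-1}(A)$, to exhibit $\eta \in \E^{n-1}(B)$ with $\varphi_n(\xi') = \overline{z} \cdot \eta$; the candidate for $\eta$ is a lift of $\xi'$ through a comparison chain map between the minimal resolution of ${}_A\bbk$ and a minimal resolution of ${}_B\bbk$. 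The main obstacle is that $B \otimes W_\bullet$ is non-minimal beyond cohomological degree two, so the chain-level description of $\varphi_n$ coming from the mapping cone does not translate directly into a Yoneda product in $\E(B)$; one must pass to a minimal resolution of ${}_B\bbk$ agreeing with $W_\bullet$ in low degrees and track how the first column $f_\bullet$ of the mapping cone, built from multiplication by $z$ on $B$, becomes, after dualizing, left multiplication by $\overline{z}$. The $\ktwo$ hypothesis on $B$ is what ultimately ensures that the products $\overline{z} \cdot \eta$ with $\eta \in \E^{n-1}(B)$ are rich enough to exhaust all of $\img \varphi_n$.
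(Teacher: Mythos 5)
Your reduction of the lemma to computing the image and kernel of $\E^n(\iota)$ is correct, and two of the three pieces are fine: the identification $\img \E^n(\iota) = \D^n(A)$ (from the $\ktwo$ hypothesis together with the lemma $\E(\iota)(\D(B)) = \D(A)$) and the easy inclusion $\overline{z}\E^{n-1}(B) \subseteq \ker \E^n(\iota)$. The genuine gap is the reverse inclusion $\ker \E^n(\iota) \subseteq \overline{z}\E^{n-1}(B)$, which is the entire content of the lemma: you do not prove it, you only outline a plan and concede a ``main obstacle,'' and the plan fails at exactly the critical point. Your candidate $\eta$ is a lift of $\xi'$ through a comparison chain map between the minimal resolution of ${}_A\bbk$ and a minimal resolution of ${}_B\bbk$; since that comparison map induces $\E(\iota)$ on cohomology, such a lift exists if and only if $[\xi'] \in \img \E(\iota) = \D^{n-1}(A)$. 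So your strategy can at best show $\varphi_n(\D^{n-1}(A)(1)) \subseteq \overline{z}\E^{n-1}(B)$; it says nothing about $\varphi_n$ (equivalently $\alpha$) applied to classes in $\E^{n-1}(A) \setminus \D^{n-1}(A)$, and at this stage you may not assume there are no such classes --- that $\E(A) = \D(A)$ is the main theorem, which this very lemma is used to prove. Your closing sentence, that the $\ktwo$ hypothesis ``ultimately ensures'' that the products $\overline{z}\cdot\eta$ exhaust $\img \varphi_n$, restates the desired conclusion rather than arguing for it.

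The paper's proof, by contrast, never touches the non-minimal resolution $B \otimes W_\bullet$ or the maps $\varphi_n$; it runs entirely inside the long exact sequence (\ref{e:csles}) and rests on two facts your proposal does not exploit on the kernel side. First, $\zeta^*$ is an $\E(\iota)$-derivation vanishing on $\E^1(A)$ and $\E^2(A)$, hence on all of $\D(A)$; via exactness this makes $\alpha$ injective on $\D^{n-1}(A)(1)$ and lets the paper cut (\ref{e:csles}) down to short exact sequences $0 \rightarrow \D^{n-1}(A)(1) \xrightarrow{\alpha} \E^n(B) \xrightarrow{\E(\iota)} \D^n(A) \rightarrow 0$, in which only $\D(A)$, never all of $\E(A)$, appears on the left --- thereby sidestepping precisely the classes your lifting argument cannot reach. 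Second, the connecting map $\alpha$ is compatible with the Yoneda $\E(B)$-action, so $\alpha \circ \E(\iota)$ is left multiplication by $\overline{z} = \alpha(1)$ (pinned down by $\ker(\E^1(\iota)) = \bbk\overline{z}$); combined with the surjectivity of $\E^{n-1}(\iota)\colon \E^{n-1}(B) \rightarrow \D^{n-1}(A)$ (which is where $B$ being $\ktwo$ enters), this yields $\alpha(\D^{n-1}(A)(1)) = \overline{z}\E^{n-1}(B)$. This module-compatibility of $\alpha$ is exactly the fact you hoped to extract by ``tracking $f_\bullet$ after dualizing,'' but in the paper it comes for free from the naturality of (\ref{e:csles}), with no need to repair the non-minimality of $B \otimes W_\bullet$.
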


\begin{proof}
As $\zeta^*$ is a $\E(\iota)$-derivation which vanishes on $\E^1(A)$
and $\E^2(A)$, and $\D(A)$ is generated by $\E^1(A)$ and $\E^2(A)$,
$\zeta^*$ vanishes on all of $\D(A)$. Thus, $\alpha$ in injective on
$\D(A)(1)$. Therefore, for each $n$, we have short exact sequences
\[0 \rightarrow \D^{n-1}(A)(1) \xrightarrow{\alpha} \E^n(B)
\xrightarrow{\E(\iota)} \D^n(A) \rightarrow 0.\]

However, since $\ker(\E(\iota): \E^1(B) \rightarrow \D^1(A)) = \bbk
\overline{z}$, surjectivity of $\E(\iota)$ implies that
\[\alpha(\D^{n-1}(A)(1)) = \overline{z} \E^{n-1}(B).\qedhere\]
\end{proof}

It is no coincidence that the previous proof is very similar to the
proof of \cite[Theorem 10.2]{csk2}, in which the short exact
sequence (\ref{e:csses}) appeared. Later, we will show that $B$
being $\ktwo$ will imply that $A$ is also $\ktwo$, meaning that the
short exact sequences (\ref{e:csses}) and (\ref{e:myses}) are indeed
the same.

We now relate the maps $\alpha$ and $\varphi$.
\begin{lemma} \label{l:alphavarphi}
Suppose $B$ is $\ktwo$, and $\E^{n,m}(A) = \D^{n,m}(A)$. Then
$\varphi_n$ is injective on $\E^{n,m}(A)(1)$.
\end{lemma}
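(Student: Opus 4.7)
The plan is to show that $\varphi_n$ coincides exactly with the map $\alpha$ from (\ref{e:csles}), and then to use Lemma \ref{l:ses} and the hypothesis to conclude injectivity on $\E^{n,m}(A)(1)$.

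To identify the two maps, I would use the observation from Lemma \ref{l:homologycalc} that $f_0 : P_\bullet \to Q_\bullet$ descends to $\zeta : C(-1) \to C$ on degree-zero homology, so the mapping cone realizes the cokernel of $\zeta$ at the level of resolutions. The mapping-cone short exact sequence of $B$-projective chain complexes
\[ 0 \to Q_\bullet \to B \otimes W_\bullet \to P_\bullet[-1] \to 0 \]
dualizes under $\Hom_B(-,\bbk)$ to a short exact sequence of cochain complexes whose long exact sequence in cohomology is precisely (\ref{e:csles}). In particular, the map $\E^{n-1}(A)(1) \to \E^n(B)$ is induced by the inclusion of $(P[-1])^*$ into the dualized mapping cone, which at cochain degree $n$ is the summand inclusion $\xi \mapsto (0,\xi)$ from $V_{n-1}(-1)^*$ into $V_n^* \oplus V_{n-1}(-1)^* = W_n^*$, exactly $\tilde{\varphi}_n$. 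Hence $\alpha = \varphi_n$.

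Given this identification, the proof of Lemma \ref{l:ses} shows that when $B$ is $\ktwo$ the map $\alpha$ is injective on $\D(A)(1)$, and in particular on $\D^{n,m}(A)(1)$. The hypothesis $\E^{n,m}(A) = \D^{n,m}(A)$ then gives injectivity of $\varphi_n = \alpha$ on $\E^{n,m}(A)(1)$. The main technical step is the identification $\varphi_n = \alpha$, which reduces to recognizing the mapping-cone SES as the resolution-level realization of (\ref{e:csles}) and matching the induced summand inclusion against $\tilde{\varphi}_n$; the remainder is immediate from Lemma \ref{l:ses} and the hypothesis.
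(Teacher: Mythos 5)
Your proposal is correct in substance, but it takes a genuinely different route from the paper's proof. The paper never identifies $\varphi_n$ with $\alpha$; it argues by a dimension count: by Lemma \ref{l:varphiandtau}(3) together with Lemma \ref{l:ses}, $\varphi$ maps $\E^{n,m}(A)$ \emph{onto} $\ker(\tau) = \overline{z}\E^{n,m}(B)$, while the proof of Lemma \ref{l:ses} shows $\alpha$ carries $\D^{n,m}(A)(1)$ isomorphically onto $\overline{z}\E^{n,m}(B)$; hence $\dim \E^{n,m}(A) = \dim \D^{n,m}(A) = \dim \overline{z}\E^{n,m}(B)$, and a surjection between spaces of equal finite dimension is injective. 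Your route instead upgrades the relation between $\alpha$ and $\varphi_n$ to an actual identification of maps via the mapping-cone short exact sequence of complexes; this is a stronger structural fact than the paper establishes (the paper relates the two maps only through their images), and it explains \emph{why} the lemma holds rather than verifying it numerically. The one step you should be careful with is the assertion that the long exact sequence of the dualized cone sequence ``is precisely'' (\ref{e:csles}): that the cone of a lift of $\zeta$ realizes the triangle of $0 \to C(-1) \xrightarrow{\zeta} C \to \bbk \to 0$ is a standard but nontrivial compatibility, it holds only up to sign, and you assert it without proof. Fortunately your argument survives on a weaker, directly checkable statement: since $f$ lifts $\zeta$, a snake-lemma computation shows the dualized cone sequence is exact, its map $\E^n(B) \to \E^n(A)$ is $\tau_n = \E(\iota)$ (Lemma \ref{l:varphiandtau}(2)), and its connecting map is $\pm\zeta^*$; exactness then gives
\[ \ker\bigl(\varphi_n|_{\E^{n-1}(A)(1)}\bigr) = \img(\zeta^*) = \ker\bigl(\alpha|_{\E^{n-1}(A)(1)}\bigr), \]
so the injectivity of $\alpha$ on $\D(A)(1)$ from Lemma \ref{l:ses} transfers to $\varphi_n$ without ever proving $\varphi_n = \pm\alpha$ on the nose. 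With the hypothesis $\E^{n,m}(A) = \D^{n,m}(A)$ this finishes the proof. In short: the paper's dimension count buys brevity and immunity from sign and identification issues; your argument buys the genuine (up to sign) equality of $\alpha$ and $\varphi$, which the paper deliberately sidesteps.
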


\begin{proof} By Lemmas \ref{l:varphiandtau} and \ref{l:ses}, $\varphi(\E^{n,m}(A))
= \overline{z} \E^{n,m}(B)$. On the other hand, \[\dim \E^{n,m}(A) =
\dim \D^{n,m}(A) = \dim \overline{z} \E^{n,m}(B),\] by injectivity
of $\alpha$ on $\D^{n}(A)$.
\end{proof}

\section{Proof of the main theorem} \label{s:proof}
Again, we continue to use the notation established in the previous
section. We are now ready to prove Theorem \ref{t:main}, which we
restate.
\begin{theorem} If $B$ is $\ktwo$, then $A$ is also $\ktwo$.
\end{theorem}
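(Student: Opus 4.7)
My plan is to establish that $A$ is $\ktwo$ by a direct dimension count. Since $\D(A) \subseteq \E(A)$ is a bigraded sub-$\bbk$-algebra, it suffices to show $\dim_\bbk \E^{n,m}(A) = \dim_\bbk \D^{n,m}(A)$ in every bidegree $(n,m)$. Fix a cohomological degree $n$ and set $\delta(m) := \dim_\bbk \E^{n,m}(A) - \dim_\bbk \D^{n,m}(A) \geq 0$. Because $V_n \subseteq A_+^{\otimes n}$ is concentrated in internal degrees $\geq n$, we have $\E^{n,m}(A) = 0$, and hence $\delta(m) = 0$, whenever $m < n$. It is therefore enough to show that $\delta(m)$ does not depend on $m$.

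To prove this constancy I read the three-term piece
\[
\E^{n,m}(A) \xrightarrow{\zeta^*} \E^{n,m+1}(A) \xrightarrow{\alpha} \E^{n+1,m}(B)
\]
of the long exact sequence \eqref{e:csles}. Since $B$ is $\ktwo$, Lemma \ref{l:ses} supplies two ingredients. First, $\ker \zeta^* = \D^n(A)$ in each bidegree, so the image of $\zeta^*$ leaving $\E^{n,m}(A)$ has dimension $\delta(m)$. Second, $\alpha$ restricted to $\D^{n,m+1}(A)$ is injective with image exactly $\overline{z}\E^{n,m-1}(B)$, which by the long exact sequence equals $\ker \E(\iota)|_{\E^{n+1,m}(B)}$; since the full image $\alpha(\E^{n,m+1}(A))$ is also contained in that kernel and contains $\alpha(\D^{n,m+1}(A))$, the two are equal. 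Hence the kernel of $\alpha$ on $\E^{n,m+1}(A)$ has dimension $\dim \E^{n,m+1}(A) - \dim \D^{n,m+1}(A) = \delta(m+1)$. Exactness at $\E^{n,m+1}(A)$ then forces $\delta(m) = \delta(m+1)$, and combined with the boundary condition $\delta(m) = 0$ for $m < n$ this gives $\delta \equiv 0$, so $\E^n(A) = \D^n(A)$ in every bidegree.

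The only delicate point is keeping bigraded indices consistent: the internal shift $(1)$ appears on the middle term of \eqref{e:csles}, $\alpha$ raises cohomological degree by $1$, and $\zeta^*$ raises internal degree by $1$, so one must be careful that the dimension identities above all refer to the same bigraded piece of $\E^{n+1}(B)$. After that, the argument uses nothing beyond the exactness of \eqref{e:csles} and the two assertions of Lemma \ref{l:ses}, and the propagation of the recursion from the vanishing region $m<n$ completes the proof that $A$ is $\ktwo$.
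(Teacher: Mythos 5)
Your proof is correct, but it takes a genuinely different route from the paper's. The paper argues by contradiction at the cochain level: it picks the lexicographically minimal bidegree $(n,m)$ with $\E^{n,m}(A) \neq \D^{n,m}(A)$, lifts a representative $\xi$ of a class outside $\D^{n,m}(A)$ to the cochain $(\xi, 0)$ on the non-minimal resolution $B \otimes W_\bullet$ of Section \ref{s:resoln}, shows $(\xi,0)$ is a cocycle using Lemma \ref{l:alphavarphi} (injectivity of $\varphi$ in bidegree $(n,m-1)$, where minimality gives $\E = \D$), and then contradicts $\E(\iota)(\E(B)) = \D(A)$. You instead run a pure dimension count in the long exact sequence (\ref{e:csles}), never touching $B \otimes W_\bullet$, the chain map $f$, or the maps $\varphi_n, \tau_n$: from $\img \E(\iota) = \D(A)$ (Lemma \ref{l:ses}) and exactness you get $\ker \zeta^* = \D(A)$, so the image of $\zeta^*$ leaving bidegree $(n,m)$ has dimension equal to your deficiency $\delta(m)$; from $\alpha(\D(A)(1)) = \overline{z}\E(B) = \ker \E(\iota)$ together with injectivity of $\alpha$ on $\D(A)(1)$, the kernel of $\alpha$ on the adjacent piece of $\E^n(A)$ has dimension equal to the adjacent deficiency; exactness then makes the deficiency constant in the internal degree, and its vanishing for $m < n$ forces it to be identically zero. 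Two remarks. First, the injectivity of $\alpha$ on $\D(A)(1)$ and the identification $\alpha(\D(A)(1)) = \overline{z}\E(B)$ are established in the \emph{proof} of Lemma \ref{l:ses}, not in its statement, so you are really citing that proof and not just the short exact sequences (\ref{e:myses}); this is legitimate (the paper itself reuses ``injectivity of $\alpha$ on $\D^n(A)$'' in Lemma \ref{l:alphavarphi}), but it should be said explicitly. Second, your degree bookkeeping is transposed relative to the paper's conventions: in (\ref{e:csles}) the connecting map $\alpha$ behaves like left multiplication by $\overline{z}$, hence has bidegree $(1,1)$, while $\zeta^*$ lowers internal degree by one, so the exact three-term piece is $\E^{n,m+1}(A) \xrightarrow{\zeta^*} \E^{n,m}(A) \xrightarrow{\alpha} \E^{n+1,m+1}(B)$ rather than the one you wrote; as you anticipated, this only relabels the recursion and does not affect the conclusion. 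As for what each approach buys: yours is shorter and shows Theorem \ref{t:main} to be a formal consequence of Lemma \ref{l:ses} and exactness alone, whereas the paper's cochain construction additionally exhibits an explicit class of $\E(B)$ restricting to a given class in $\E^{n,m}(A) \setminus \D^{n,m}(A)$, which is what makes the first example of Section \ref{s:examples} transparent.
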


\begin{proof}
Suppose $A$ is not $\ktwo$. Then there is a unique bidegree $(n,m)$
such that $\E^{n,m}(A) \not = \D^{n,m}(A)$ but $\E^{i,j}(A) =
\D^{i,j}(A)$ when $i < n$ or when $i = n$ and $j < m$. Let $\xi:
V_{n,m} \rightarrow \bbk$ represent a cohomology class $[\xi] \in
\E^{n,m}(A) \setminus \D^{n,m}(A)$. Under the natural isomorphism
\[\Hom_\bbk(V_n, \bbk) \simeq \Hom_A(A \otimes V_n, \bbk),\] we can
also view $\xi: A \otimes V_n \rightarrow \bbk$.

Recall the chain map $f$ and the projective resolution $B \otimes
W_\bullet \rightarrow \bbk \rightarrow 0$ in the previous section.
Let $\xi'$ be the map $(\xi, 0): B \otimes (V_n \oplus V_{n-1}(-1))
\rightarrow \bbk$.

We wish to show $\xi'$ represents a cohomology class $[\xi']$ in
$\E^n(B)$. Under the natural isomorphism \[\Hom_\bbk(V_n \oplus
V_{n-1}, \bbk) \simeq \Hom_B(B \otimes (V_n \oplus V_{n-1}),
\bbk),\] we have \[\partial^*_{B \otimes W_\bullet}(\xi') =
(\partial_n^*(\xi),f_n^*(\xi)): V_{n+1, m} \oplus V_n(-1)_m
\rightarrow \bbk.\] By degree considerations, $\partial_n^*(\xi) =
0$, and therefore we need only consider $f_n^*(\xi): V_n(-1)_m
\rightarrow \bbk$.

Note that $V_n(-1)_m = V_{n,m-1}$. By choice of $n$ and $m$,
$E^{n,m-1}(A) = \D^{n,m-1}(A)$, and so by Lemma \ref{l:alphavarphi},
$\varphi: \E^{n, m-1}(A) \rightarrow \E^{n+1, m}(B)$ is injective.
Therefore, any element of \[\Hom_\bbk(V_n(-1)_m, \bbk) \subset
\Hom_\bbk(V_{n+1, m} \oplus V_n(-1)_m, \bbk)\] represents a nonzero
cohomology class in $\E^{n+1, m}(B)$.  Therefore,
$\Hom_\bbk(V_n(-1)_m, \bbk) \cap \partial^*_{B \otimes W_\bullet} =
0$. Hence, $\partial^*_{B \otimes W_\bullet}(\xi') = 0$, and
$[\xi']$ is a cohomology class in $\E^{n}(B)$.

However, by Lemma \ref{l:varphiandtau}, $\E(\iota)([\xi']) =
[\tilde\tau(\xi')] = [\xi]$, contrary to $\E(\iota)(\E(B)) = \D(A)$.
\end{proof}

\section{Examples} \label{s:examples}

We conclude with some examples. Throughout, $A(d_1, d_2, d_3,
\dots)$ denotes the graded-free $A$-module $A(d_1) \oplus A(d_2)
\oplus A(d_3) \oplus \cdots$. Let $\Tr(A) := \Tor^A(\bbk, \bbk)$,
which is a bigraded coalgebra, with comultiplications $\Tr_{i+j}(A)
\rightarrow \Tr_i(A) \otimes \Tr_j(A)$ induced by $A_+^{\otimes i+j}
\rightarrow A_+^{\otimes i} \otimes A_+^{\otimes j}$. An algebra $A$
is $\ktwo$ if and only if
\[\Delta: \Tr^n(A) \rightarrow \Tr^2(A) \otimes \Tr^{n-2}(A) \oplus
\Tr^1(A) \otimes \Tr^{n-1}(A)\] is injective.

\begin{example}
This example illustrates Theorem \ref{t:main} by showing how some of
the cohomology classes in $\E(A) \setminus \D(A)$ also appear in
$\E(B)$. Let $A := \bbk\left<x, y \right> / \left<x^2 y , y^2 x
\right>$. Define an automorphism $\sigma: A \rightarrow A$ via
$\sigma(x) := y$ and $\sigma(y) := x$. Define a $\sigma$-derivation
$\delta: A(-1) \rightarrow A$ via $\delta(x) = xy$ and $\delta(y) =
yx$. Let $B := A[z; \sigma, \delta]$ be the associated Ore
extension.

A minimal projective resolution of $_A\bbk$ begins \[ \dots
\rightarrow A(-7)^2 \xrightarrow{M_4} A(-5)^2 \xrightarrow{M_3}
A(-3)^2 \xrightarrow{M_2} A(-1)^2 \xrightarrow{M_1} A \rightarrow
\bbk \rightarrow 0,
\]
where \[ M_4 := \left(
\begin{matrix}
y^2 & 0 \\ 0 & x^2 \end{matrix} \right), M_3 := \left(
\begin{matrix} x^2 & 0 \\ 0 & y^2 \end{matrix} \right), M_2 :=
 \left(
\begin{matrix}
y^2 & 0 \\ 0 & x^2
\end{matrix}
\right), \text{ and } M_1 := \left( \begin{matrix} x \\ y
\end{matrix} \right). \]

The algebra $A$ is not $\ktwo$ because $y^2 \otimes x^2 \otimes y$
represents a nonzero homology class in $\Tr_3(A)$, but
\begin{multline*}
\Delta(y^2 \otimes x^2 \otimes y + \img
\partial) = \\ (y^2 \otimes x^2 + \img \partial) \otimes (y + \img
\partial) + (y^2 + \img \partial) \otimes (x^2 \otimes y + \img
\partial) = 0.\end{multline*}

Now, we can begin a minimal projective resolution of $_B\bbk$ with
\begin{multline*}
\dots \rightarrow B(-7)^2 \oplus B(-6)^2 \xrightarrow{\left(
\begin{matrix} M_4 & 0 \\ f_3 & M_3 \end{matrix} \right)}
B(-5)^2 \oplus B(-4)^2  \xrightarrow{\left(\begin{matrix} M_3 & 0 \\ f_2 & M_2 \end{matrix}\right)}\\
B(-3)^2 \oplus B(-2)^2\xrightarrow{ \left( \begin{matrix} M_2 & 0
\\ f_1 & M_1 \end{matrix} \right)} B(-1)^2 \oplus B(-1)
\xrightarrow{\left( \begin{matrix} M_1 \\ f_0 \end{matrix} \right)}
B \rightarrow \bbk \rightarrow 0,
\end{multline*}
where \begin{multline*} f_3 :=  \left( \begin{matrix} 0 & -z - x \\
-z-y & 0
\end{matrix}\right)
f_2 := \left( \begin{matrix} 0 & -z-y \\ -z - x & 0 \end{matrix} \right),\\
f_1 := \left( \begin{matrix} 0 & -z-x \\ -z - y & 0 \end{matrix}
\right), \text{ and } f_0:= (z).
\end{multline*}
Again, $y^2 \otimes x^2 \otimes y$ represents a nonzero homology
class in $\Tr_3(B)$, and $\Delta(y^2 \otimes x^2 \otimes y + \img
\partial) = 0$, meaning $B$ is not $\ktwo$.
\end{example}

\begin{example} \label{e:fwithdegzero}
This example shows that the resolution $B \otimes W_\bullet$ need
not be a minimal projective resolution. Let $A := \bbk\left<w, x,
y,u\right>/\left<yu, ux - xu, uw\right>$. The following is a minimal
projective resolution for $_A\bbk$: \[ 0 \rightarrow A(-3, -4, -5,
\dots) \xrightarrow{M_3} A(-2)^3 \xrightarrow{M_2} A(-1)^4
\xrightarrow{M_1} A \rightarrow \bbk \rightarrow 0,\] where
\[M_3 := \left(\begin{matrix} y & 0 & 0 \cr yx & 0 & 0 \cr yx^2 & 0
& 0 \cr & \vdots & \end{matrix}\right), M_2 :=  \left(
\begin{matrix} u & 0 & 0 &0 \cr 0 & u & 0 & -x \cr 0 & 0 & 0 &
y\end{matrix} \right), \text{ and } M_1 := \left(\begin{matrix} w
\cr x \cr y \cr u
\end{matrix} \right). \]
(The algebra $A$ and the resolution above first appeared in
\cite{cspbw}.) Note that $A$ is quadratic but not Koszul, and
therefore is not $\ktwo$.

Let $\sigma = \id_A$ and define a derivation $\delta: A(-1)
\rightarrow A$ via $\delta(w) = w^2$, $\delta(x) = x^2$, $\delta(y)
= y^2$, and $\delta(u) = u^2$. Let $B = A[z; \delta]$ be the
associated Ore extension. The algebra $B$ is also quadratic.

We exhibit the chain map constructed by Lemma \ref{l:fexists}:
\begin{align*}
f_3: \bigoplus_{i \geq 4} B(-i) \xrightarrow{\left( \begin{matrix}
-z-y & 0 & 0 & 0 & \cr 0 & -z-y & 1 & 0 & \cdots \cr 0 & 0 & -z-y &
2 & \cr & & \vdots & &
\end{matrix} \right)}
 \bigoplus_{i \geq 3} B(-i), \cr
f_2: B(-3)^3 \xrightarrow{\left(\begin{matrix} z+u & 0 & 0 \cr 0 &
 z+ u+ x & 0 \cr 0 & 0 & -z-y\end{matrix}\right)} B(-2)^3, \cr
f_1: B(-2)^4 \xrightarrow{\left(\begin{matrix} -z-w & 0 & 0 &
 0 \cr 0 & -z-x & 0 & 0 \cr 0 & 0 & -z-y & 0 \cr 0 & 0 & 0 & -z-u
 \end{matrix}\right)} B(-1)^4, \cr
\text{ and } f_0: B(-1) \xrightarrow{\left(\begin{matrix} z
\end{matrix}\right)} B.
 \end{align*}
Note the constant entries in the matrix for $f_3$. (To see that this
is the correct matrix, use induction to show that $x^nz = zx^n +
nx^{n+1}$ in $B$, and therefore $yx^n(z+u) = (z+y) yx^n + nyx^{n+1}$
in $B$.) Now, the projective resolution $B \otimes W_\bullet$ for
$_B\bbk$ is given by
\begin{multline*}
0 \rightarrow B(-4, -5, -6, \dots) \xrightarrow{\left(\begin{matrix}
f_3 & M_3 \end{matrix}\right)} B(-3, -4, -5, \dots,) \oplus B(-3)^3
\xrightarrow{\left(\begin{matrix} M_3 & 0 \cr f_2 &
M_2\end{matrix}\right)} \cr B(-2)^3 \oplus B(-2)^4
\xrightarrow{\left(\begin{matrix} M_2 & 0 \cr f_1 & M_1
\end{matrix}\right)} B(-1)^4 \oplus B(-1)
\xrightarrow{\left(\begin{matrix} M_1 \cr f_0\end{matrix}\right)} B
\rightarrow \bbk \rightarrow 0.\end{multline*}

To calculate $\E(B)$, we apply $\Hom_B(-, \bbk)$ and use the natural
isomorphism $\Hom_B(B \otimes -, \bbk) \simeq \Hom_\bbk(-, \bbk)$ to
get the sequence \begin{multline*} 0 \rightarrow \Hom_\bbk(\bbk,
\bbk) \xrightarrow{0} \Hom_\bbk(\bbk(-1)^5, \bbk) \xrightarrow{0}
\Hom_\bbk(\bbk(-2)^7, \bbk) \xrightarrow{0} \cr
\Hom_\bbk\left(\bigoplus_{i \geq 3} \bbk(-i) \oplus \bbk(-3)^3,
\bbk\right) \xrightarrow{d} \Hom_\bbk\left(\bigoplus_{i \geq 4}
\bbk(-i), \bbk\right) \rightarrow 0.\end{multline*} For $j \geq 3$,
let $\lambda_j \in \Hom_\bbk\left(\bigoplus_{i \geq 3} \bbk(-i)
\oplus \bbk(-3)^3, \bbk\right)$ be the map $\bigoplus_{i \geq 3}
\bbk(-i) \rightarrow \bbk$ by projecting on the $(j-2)$th
coordinate. Similarly define $\rho_j \in\Hom_\bbk\left(\bigoplus_{i
\geq 4} \bbk(-i), \bbk\right)$ for $j \geq 4$. Then one can show
that for $j \geq 5$, $d(\lambda_j) = (j-3) \rho_j$, but that
$d(\lambda_3) = 0$ and $d(\lambda_4) = 0$. Hence $\E^{3,4}(B) \not =
0$, meaning $B$ is not Koszul (and therefore not $\ktwo$, since $B$
is quadratic).
\end{example}

\begin{bibdiv}
\begin{biblist}


\bib{berger}{article}{
   author={Berger, Roland},
   title={Koszulity for nonquadratic algebras},
   journal={J. Algebra},
   volume={239},
   date={2001},
   number={2},
   pages={705--734},
   issn={0021-8693},
   review={\MR{1832913 (2002d:16034)}}
}

\bib{cpsk2}{article}{
        title={The Yoneda algebra of a $\ktwo$ algebra need not be another $\ktwo$ algebra},
        author={Thomas Cassidy},
        author={Christopher Phan},
        author={Brad Shelton},
        journal={Comm. Algebra},
        volume={38},
        number={1},
        year={2010},
        pages={46--48}
}

\bib{csk2}{article}{
   author={Cassidy, Thomas},
   author={Shelton, Brad},
   title={Generalizing the notion of Koszul algebra},
   journal={Math. Z.},
   volume={260},
   date={2008},
   number={1},
   pages={93--114},
   issn={0025-5874},
   review={\MR{2413345 (2009e:16047)}}
}

\bib{cspbw}{article}{
   author={Cassidy, Thomas},
   author={Shelton, Brad},
   title={PBW-deformation theory and regular central extensions},
   journal={J. Reine Angew. Math.},
   volume={610},
   date={2007},
   pages={1--12},
   issn={0075-4102},
   review={\MR{2359848 (2008j:16090)}}
}

\bib{bluebook}{book}{
   author={Goodearl, K. R.},
   author={Warfield, R. B., Jr.},
   title={An introduction to noncommutative Noetherian rings},
   series={London Mathematical Society Student Texts},
   volume={61},
   edition={2},
   publisher={Cambridge University Press},
   place={Cambridge},
   date={2004},
   pages={xxiv+344},
   isbn={0-521-83687-5},
   isbn={0-521-54537-4},
   review={\MR{2080008 (2005b:16001)}}
}
\bib{gs66}{article}{
   author={Gopalakrishnan, N. S.},
   author={Sridharan, R.},
   title={Homological dimension of Ore-extensions},
   journal={Pacific J. Math.},
   volume={19},
   date={1966},
   pages={67--75},
   issn={0030-8730},
   review={\MR{0200324 (34 \#223)}},
}

\bib{phan09}{article}{
   author={Phan, Christopher},
   title={Generalized Koszul properties for augmented algebras},
   journal={J. Algebra},
   volume={321},
   date={2009},
   number={5},
   pages={1522--1537},
   issn={0021-8693},
   review={\MR{2494406 (2010a:16017)}}
}

\bib{pp}{book}{
   author={Polishchuk, Alexander},
   author={Positselski, Leonid},
   title={Quadratic algebras},
   series={University Lecture Series},
   volume={37},
   publisher={American Mathematical Society},
   place={Providence, RI},
   date={2005},
   pages={xii+159},
   isbn={0-8218-3834-2},
   review={\MR{2177131 (2006f:16043)}},
}

\bib{priddy70}{article}{
   author={Priddy, Stewart B.},
   title={Koszul resolutions},
   journal={Trans. Amer. Math. Soc.},
   volume={152},
   date={1970},
   pages={39--60},
   issn={0002-9947},
   review={\MR{0265437 (42 \#346)}},
}

\bib{weibel}{book}{
   author={Weibel, Charles A.},
   title={An introduction to homological algebra},
   series={Cambridge Studies in Advanced Mathematics},
   volume={38},
   publisher={Cambridge University Press},
   place={Cambridge},
   date={1994},
   pages={xiv+450},
   isbn={0-521-43500-5},
   isbn={0-521-55987-1},
   review={\MR{1269324 (95f:18001)}},
}
\end{biblist}
\end{bibdiv}

\end{document}